\renewcommand{\sqrt}[1]{\left( #1 \right)^\frac12}
\newcommand{\e}[0]{\mathrm{e}}
\newcommand{\E}[0]{\mathbb{E}}
\newcommand{\T}[0]{\mathbb{T}}
\newcommand{\N}[0]{\mathbb{N}}
\newcommand{\R}[0]{\mathbb{R}}
\newcommand{\Z}[0]{\mathbb{Z}}
\renewcommand{\d}[0]{\mathrm{d}}
\newcommand{\p}[1]{\left(#1\right)}
\newcommand{\beq}{\begin{equation}}
\newcommand{\eeq}{\end{equation}}
\newcommand{\supp}[0]{\mathrm{supp}\,}
\author{Maciej Rzeszut}
\title{Fefferman multiplier theorem for Hardy martingales}
\newtheorem{thm}{Theorem}
\newtheorem{cor}[thm]{Corollary}
\newtheorem{prop}[thm]{Proposition}
\numberwithin{thm}{section}
\numberwithin{equation}{section}
\begin{document}
\begin{abstract}
A well-known theorem due to Fefferman provides a characterization of Fourier multipliers from $H^1(\mathbb{T})$ to $\ell^1$, i.e. sequences $\left(\lambda_n\right)_{n=0}^\infty$ such that 
\[\sum_{n=0}^\infty \left|\lambda_n \widehat{f}(n)\right|\lesssim \|f\|_{L^1(\mathbb{T})},\]
where $f(x)=\sum_{n=0}^\infty \widehat{f}(n)e^{inx}$. We extend it to the space $H^1\left(\mathbb{T}^\mathbb{N}\right)$ of Hardy martingales, i.e. the subspace of $L^1$ on the countable product $\mathbb{T}^\mathbb{N}$ consisting of all $f$ such that the differences $\Delta_nf=f_{n}-f_{n-1}$ of the martingale wrt the standard filtration generated by $f$ satisfy
\[\left(t\mapsto \Delta_n f\left(x_1,\ldots,x_{n-1},t\right)\right)\in H^1(\mathbb{T}). \]
The key ingredient is a theorem due to P. F. X. M\"uller stating that the classical Davis-Garsia decomposition 
\[\mathbb{E} \left(\sum_{n=0}^\infty \left|\Delta_n f\right|^2\right)^\frac{1}{2}\simeq \inf_{f=g+h} \mathbb{E}\sum_{n=0}^\infty \left|\Delta_n g\right|+ \mathbb{E}\left(\sum_{n=0}^\infty \mathbb{E}\left(\left|\Delta_n f\right|^2\mid \mathcal{F}_{n-1}\right)\right)^\frac{1}{2}\]
may be done within the space of Hardy martingales.  
\end{abstract}
\maketitle
\section{Introduction}
Suppose $X$ is a shift-invariant Banach space of functions on a compact abelian group $\mathbf{G}$. If $X\subset L^1\p{\mathbf{G}}$, then the Fourier transform is well defined on $X$ and we may ask which sequences $\lambda: \widehat{\mathbf{G}}\to \R_+$ satisfy the inequality
\beq \sum_{\gamma\in \widehat{\mathbf{G}}} \lambda_\gamma \left|\widehat{f}\p{\gamma}\right|\lesssim_\lambda \|f\|_X\eeq
for $f\in X$. They are called $X\to \ell^1$ Fourier multipliers. A complete characterization is known for $\mathbf{G}=\T$, $X=H^1\p{\T}$ due to Fefferman \cite{szw}: a sequence $\lambda:\N:=\Z_+\to \R_+$ is an $H^1\p{\T}\to\ell^1$ multiplier iff
\beq \left\|\lambda\right\|_{F}:=\sup_{a\geq 1} \sum_{k=1}^\infty \p{\sum_{j=ak}^{a(k+1)-1}\lambda_j}^2.\eeq
We are going to find analogous conditions in 2 new cases:
\begin{itemize}\item $\mathbf{G}=G^\N$ and $X=H^1\left[\p{\mathcal{F}_n}_{n=0}^\infty\right]$ where $G$ is a compact abelian group and $\p{\mathcal{F}_n}_{n=0}^\infty$ is the canonical filtration on $G^\N$;
\item $\mathbf{G}=\T^\N$ and $X=H^1_{\mathrm{last}}\p{\T^\N}$ is the subspace of $L^1\p{\T^\N}$ consisting of functions $f$ generating a martingale such that $\Delta_k f$ is an $H^1\p{\T}$ function in the $k$-th variable.
\end{itemize}
We are going to use a simple observations expressing the desired property in terms of the space dual to $X$. 
\begin{prop}\label{dualization}Let $X$ be a shift-invariant space of $\ell^2(S)$-valued functions on $\mathbf{G}$ such that $X\subset L^1\p{\mathbf{G},\ell^2(S)}$. A sequence $\lambda:\widehat{\mathbf{G}}\times S\to \R_+$ satisfies 
\beq \sum_{\gamma\in\widehat{\mathbf{G}},s\in S} \lambda_{\gamma,s}\left| \left\langle \widehat{f}\p{\gamma},e_s\right\rangle\right|\lesssim_\lambda \|f\|_X\eeq
for any $f\in X$ if and only if
\beq \sup_{\left|c_{\gamma,s}\right|=1} \left\|\sum_{\gamma,s}c_{\gamma,s}\lambda_{\gamma,s}\gamma \otimes e_s\right\|_{X^*}\lesssim 1.\eeq\end{prop}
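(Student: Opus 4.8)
The plan is to read the claimed equivalence as a statement about the boundedness of a single linear operator together with its adjoint. Define $T\colon X\to\ell^1\p{\widehat{\mathbf G}\times S}$ by $\p{Tf}_{\gamma,s}=\lambda_{\gamma,s}\ilsk{\widehat f\p\gamma,e_s}$; the left inequality is precisely the assertion that $T$ is bounded with $\n{T}\lesssim 1$. The first step is to recognize each coordinate functional as a pairing against the rank-one object $\gamma\otimes e_s$, namely $\ilsk{\widehat f\p\gamma,e_s}=\ilsk{f,\gamma\otimes e_s}$ for the canonical $X$--$X^*$ duality, where $\gamma\otimes e_s$ denotes the function $x\mapsto\gamma\p x e_s$. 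Because the inclusion $X\subset L^1\p{\mathbf G,\ell^2(S)}$ is continuous, $\left|\ilsk{\widehat f\p\gamma,e_s}\right|\le\n{f}_{L^1\p{\ell^2}}\lesssim\n{f}_X$, so each $\gamma\otimes e_s$ is indeed a bounded functional, i.e. lies in $X^*$, and the pairing makes sense.

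Granting this identification, both implications are the two halves of the duality $\n{T}=\n{T^*}$. For the adjoint, $T^*\colon\ell^\infty\to X^*$ sends a bounded family $\p{c_{\gamma,s}}$ to $\sum_{\gamma,s}c_{\gamma,s}\lambda_{\gamma,s}\,\gamma\otimes e_s$, since $\ilsk{T^*c,f}=\ilsk{c,Tf}=\sum_{\gamma,s}c_{\gamma,s}\lambda_{\gamma,s}\ilsk{\widehat f\p\gamma,e_s}=\ilsk{f,\sum c_{\gamma,s}\lambda_{\gamma,s}\gamma\otimes e_s}$. Thus $\n{T^*}=\sup_{\n{c}_\infty\le1}\n{T^*c}_{X^*}$, and the reduction of this supremum over the whole unit ball of $\ell^\infty$ to one over unimodular families $\left|c_{\gamma,s}\right|=1$ is the remaining elementary point: $c\mapsto\n{T^*c}_{X^*}$ is convex and norm-continuous, so its maximum over the ball is attained on the extreme points, and the extreme points of the (complex) unit ball of $\ell^\infty$ are exactly the unimodular families. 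Concretely, in the forward direction one may bypass $T$ altogether: given $f$, choose unimodular $c_{\gamma,s}$ aligning the phases so that $c_{\gamma,s}\ilsk{\widehat f\p\gamma,e_s}=\left|\ilsk{\widehat f\p\gamma,e_s}\right|$, whence $\sum\lambda_{\gamma,s}\left|\ilsk{\widehat f\p\gamma,e_s}\right|=\ilsk{f,\sum c_{\gamma,s}\lambda_{\gamma,s}\gamma\otimes e_s}\le\n{f}_X\cdot\sup_{|c|=1}\n{\sum c\lambda\gamma\otimes e_s}_{X^*}$; in the reverse direction one pairs a near-extremal $f$ for the $X^*$-norm against $T$.

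The step I expect to be the real obstacle is convergence and the matching of the two a priori infinite sums, since $\widehat{\mathbf G}\times S$ is infinite and neither side has been assumed summable in advance. I would remove this by exploiting $\lambda\ge0$: by monotone convergence the left inequality for $\lambda$ is equivalent to the same inequality, with a uniform constant, for every finitely supported truncation $\lambda\mathbbm{1}_F$, and for such truncations $T$ has finite-rank-type behaviour, so that all the sums above are finite and the adjoint computation is unconditional. It then remains to check that the right-hand supremum for $\lambda$ equals the supremum of the truncated suprema; here one must verify that the norm of a finite partial sum $\n{\sum_{F}c\lambda\gamma\otimes e_s}_{X^*}$ is controlled by the full supremum and, conversely, that the series $\sum_{\gamma,s}c_{\gamma,s}\lambda_{\gamma,s}\gamma\otimes e_s$ converges in $X^*$ to the correct limit whenever that supremum is finite. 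This truncation and convergence bookkeeping, rather than the duality itself, is where the care is needed.
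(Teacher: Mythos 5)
Your argument is correct and is essentially the paper's proof: the paper simply writes $\left|\left\langle \widehat{f}(\gamma),e_s\right\rangle\right|=\sup_{|c_{\gamma,s}|=1}c_{\gamma,s}\left\langle \widehat{f}(\gamma),e_s\right\rangle$, recognizes the resulting sum as the pairing $\left\langle f,\sum c_{\gamma,s}\lambda_{\gamma,s}\gamma\otimes e_s\right\rangle$, and swaps the two suprema --- which is exactly your ``concretely, one may bypass $T$'' computation, just without the $\n{T}=\n{T^*}$ packaging. Your extra care about truncation and convergence (via $\lambda\geq 0$ and monotone convergence) addresses bookkeeping the paper silently glosses over, but does not change the route.
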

\begin{proof}
We have
\begin{align}\sup_{\|f\|_X=1}  \sum_{\gamma\in\widehat{\mathbf{G}},s\in S} \lambda_{\gamma,s}\left| \left\langle \widehat{f}\p{\gamma},e_s\right\rangle\right| & = \sup_{\|f\|_X=1}\sup_{\left|c_{\gamma,s}\right|=1}  \sum_{\gamma,s} \lambda_{\gamma,s}c_{\gamma,s}\left\langle \widehat{f}\p{\gamma},e_s\right\rangle\\ 
&= \sup_{\|f\|_X=1}\sup_{\left|c_{\gamma,s}\right|=1}  \left\langle f,\sum_{\gamma,s} \lambda_{\gamma,s}c_{\gamma,s}\gamma\otimes e_s\right\rangle\\
&= \sup_{\left|c_{\gamma,s}\right|=1} \left\|\sum_{\gamma,s}c_{\gamma,s}\lambda_{\gamma,s}\gamma \otimes e_s\right\|_{X^*}.
\end{align}
\end{proof}
\section{Martingale Hardy spaces}
First, we are going to consider spaces of adapted sequences. Let $G$ ba a compact abelian group, $\Gamma$ be its dual, $\mathcal{F}_k$ be the sigma-algebra on $G^\N$ generated by the coordinate projection $x\mapsto \p{x_j}_{j=1}^k$ and $\mathcal{H}=\ell^2(S)$ be a Hilbert space. We define 
\beq \label{eq:adapdef}L^1\p{G^\N,\left[\p{\mathcal{F}_k}_{k=0}^\infty\right],\ell^2\p{\N,\mathcal{H}}}= \left\{f\in L^1\p{G^\N,\ell^2\p{\N,\mathcal{H}}}: f_k\text{ is }\mathcal{F}_k\text{-measurable}\right\}.\eeq
\begin{thm}\label{feffadap}The norm of a positive sequence $\lambda^{(k)}_{\gamma,s}$ where $\gamma\in \Gamma^{k}$ as a Fourier multiplier from the space $L^1\p{G^\N,\left[\p{\mathcal{F}_k}_{k=0}^\infty\right],\ell^2\p{\N,\mathcal{H}}}$ to $\ell^1\p{\bigsqcup_k \Gamma^k \times S}$ is equivalent to
\beq \label{eq:oogachaka}\sup_k \p{\sum_{j\geq k} \sum_{s\in S}\sum_{\gamma'\in \Gamma^{[k+1,j]}}\left(\sum_{\gamma\in \Gamma^k} \lambda^{(j)}_{\gamma\otimes\gamma',s}\right)^2}^\frac12.\eeq\end{thm}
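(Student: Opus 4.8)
The plan is to dualise via Proposition~\ref{dualization} and then compute the resulting dual norm by hand. Write $X$ for the space \eqref{eq:adapdef}; for unimodular $c=\p{c^{(j)}_{\gamma,s}}$ put $\Phi^c_j=\sum_{\gamma\in\Gamma^j,s}c^{(j)}_{\gamma,s}\lambda^{(j)}_{\gamma,s}\,\gamma\otimes e_s$, so that $\Phi^c=\p{\Phi^c_j}_j$ is an adapted sequence and, by Proposition~\ref{dualization}, the multiplier norm equals $\sup_{|c|=1}\n{\Phi^c}_{X^*}$. First I would identify $X^*$: as $X$ is a closed subspace of $L^1\p{G^\N,\ell^2\p{\N,\mathcal{H}}}$ and each $f_j$ is $\mathcal{F}_j$-measurable, the pairing $\ilsk{f,g}=\E\sum_j\ilsk{f_j,g_j}$ sees $g$ only through $\p{\E\p{g_j\mid\mathcal{F}_j}}_j$, whence $X^*=L^\infty\p{\ell^2}/X^\perp$ with $X^\perp=\{g:\E\p{g_j\mid\mathcal F_j}=0\}$ and
\beq\label{eq:dualnorm}\n{\Phi}_{X^*}=\inf\left\{\n{g}_{L^\infty\p{\ell^2\p{\N,\mathcal H}}}:\E\p{g_j\mid\mathcal F_j}=\Phi_j\ \text{for all }j\right\}\eeq
for adapted $\Phi$. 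Two reductions are convenient. Restricting to $f$ supported on levels $j\le N$ and using that every summand in \eqref{eq:oogachaka} is nonnegative, I may assume finitely many levels and pass to $N\to\infty$ by monotone convergence at the end; and since averaging any competitor $g$ over the coordinates beyond $x_N$ can only decrease the norm in \eqref{eq:dualnorm}, the infimum is attained by $g$ on $G^N$ with the constraint $\E_{x_{j+1},\ldots,x_N}g_j=\Phi_j$. Finally I record, with $\Lambda^{(j)}\p{x}=\sum_{\eta\in\Gamma^j,s}\lambda^{(j)}_{\eta,s}\eta\p{x}e_s$ and $x=\p{u,v}$, $u\in G^k$, the Plancherel identity
\beq\label{eq:peakPlancherel}\sum_{\gamma'\in\Gamma^{[k+1,j]}}\sum_s\p{\sum_{\gamma\in\Gamma^k}\lambda^{(j)}_{\gamma\otimes\gamma',s}}^2=\int_{G^{j-k}}\n{\Lambda^{(j)}\p{0,v}}_{\mathcal H}^2\,\d v,\eeq
$0$ denoting the identity of $G^k$; it shows that the inner sum over $\gamma\in\Gamma^k$ is an $\ell^1\to L^\infty$ (``peak'') quantity — the value at the identity of the first $k$ coordinates — while the outer part of \eqref{eq:oogachaka} is a square function over the remaining coordinates, the levels $j\ge k$, and $S$.

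For the lower bound I test the worst sign choice $c\equiv1$. Fix $k$, set $\mu^{(j)}_{\gamma',s}=\sum_{\gamma\in\Gamma^k}\lambda^{(j)}_{\gamma\otimes\gamma',s}\ge0$ and $b^{(j)}\p{v}=\sum_{\gamma'\in\Gamma^{[k+1,j]},s}\mu^{(j)}_{\gamma',s}\gamma'\p{v}e_s$, and let $K_R$ be a summability kernel on $G^k$ ($\n{K_R}_{L^1}=1$, $\widehat{K_R}\p{\gamma}\to1$). Put $f_j\p{x}=K_R\p{u}b^{(j)}\p{v}$ for $j\ge k$ and $f_j=0$ otherwise; each $f_j$ is $\mathcal F_j$-measurable. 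Factoring $K_R$ out of the square function and applying Jensen gives
\beq\n{f}_X=\n{K_R}_{L^1}\,\E_{x_{>k}}\p{\sum_{j\ge k}\n{b^{(j)}}_{\mathcal H}^2}^{\frac12}\le\p{\sum_{j\ge k}\sum_s\sum_{\gamma'}\p{\mu^{(j)}_{\gamma',s}}^2}^{\frac12},\eeq
the expression \eqref{eq:oogachaka} at this $k$; meanwhile $\widehat{K_R}\to1$ forces $\ilsk{f,\Phi^1}\to\sum_{j\ge k,\gamma',s}\p{\mu^{(j)}_{\gamma',s}}^2$, its square. Dividing and taking the supremum over $k$ yields $\sup_{|c|=1}\n{\Phi^c}_{X^*}\gtrsim$ \eqref{eq:oogachaka}.

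The matching upper bound is the heart of the matter. For every unimodular $c$ I must exhibit a lift $g$ of $\Phi^c$ in \eqref{eq:dualnorm} with $\n{g}_{L^\infty\p{\ell^2}}\lesssim$ \eqref{eq:oogachaka}; since $\left|\ilsk{\widehat{\Phi^c_j}\p{\eta},e_s}\right|=\lambda^{(j)}_{\eta,s}$ regardless of $c$, the construction will use only the moduli $\lambda$ and so be uniform in $c$. The key structural point is that the coordinate $x_i$ is strictly future — hence available for spreading the lift — for every level $j<i$, but is the ``present'' coordinate only for level $i$; this is precisely the freedom that makes \eqref{eq:oogachaka}, rather than the naive bound $\n{\p{\sum_j\n{\Phi^c_j}_{\mathcal H}^2}^{1/2}}_\infty$, the correct answer (the two genuinely differ, e.g. when each level carries many frequencies in its own new coordinate). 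I would therefore build $g$ by spreading each $\Phi^c_j$ into $x_{j+1},\ldots,x_N$ so as to damp it where the higher levels' peaks from \eqref{eq:peakPlancherel} concentrate, and prove the $L^\infty\p{\ell^2}$ estimate by induction on $N$, peeling off $x_N$: conditionally on $\mathcal F_{N-1}$ the level-$N$ term produces the peak $\sum_{\gamma\in\Gamma^N}\lambda^{(N)}_{\gamma,s}$ (the $k=N$ block of \eqref{eq:oogachaka}), while the spread of the lower levels into $x_N$ turns the $j=N$, $k<N$ summands into square-function mass, leaving a system on $G^{N-1}$ to which the inductive hypothesis applies. I expect this balancing — choosing one spreading that simultaneously controls all levels and reproduces the supremum over $k$ — to be the main obstacle, with the finite-$N$ reduction and the reading \eqref{eq:peakPlancherel} of \eqref{eq:oogachaka} the tools that make it tractable; the passage $N\to\infty$ is then immediate.
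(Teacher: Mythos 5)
Your dualization step and your lower bound are sound: the quotient description of $X^*$ is correct, and the test functions $K_R\otimes b^{(j)}$ together with Jensen and Parseval do give the multiplier norm $\gtrsim$ \eqref{eq:oogachaka}. The problem is the upper bound, which you rightly call the heart of the matter but do not prove: you describe a strategy (spreading each $\Phi^c_j$ into the future coordinates and inducting on $N$) and then explicitly flag the key balancing step as an obstacle you expect to meet. As written this is a plan, not an argument, and it is exactly where the substance of the theorem lies. What you are trying to build by hand --- a lift $g$ of $\Phi^c$ with $\n{g}_{L^\infty\p{\ell^2}}\lesssim \eqref{eq:oogachaka}$ --- is, by Hahn--Banach, equivalent to the hard direction of the duality theorem for adapted sequences: the quotient norm $\inf\left\{\n{g}_{L^\infty\p{\ell^2}}:\E\p{g_j\mid\mathcal F_j}=\varphi_j\right\}$ is equivalent to $\sup_k\n{\E_k\sum_{j\geq k}\n{\varphi_j}_{\mathcal H}^2}_{L^\infty}^{1/2}$, i.e.\ to a Fefferman--Garsia type bilinear estimate whose proof needs a stopping time or Davis-type decomposition. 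A naive induction on coordinates will not produce it, because the spreading at each stage must be chosen adaptively, according to where the conditional square function of the remaining levels is large; that adaptivity is precisely what the stopping time supplies and what your sketch leaves unresolved.

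The paper avoids the construction entirely by quoting this dual norm formula from Weisz, after which the whole theorem is a pointwise computation: for fixed $k$, $x\in G^k$ and any unimodular $c$, conditional orthonormality of the characters $\gamma'\in\Gamma^{[k+1,j]}$ gives
\beq \E_k\sum_{j\geq k}\sum_{s}\Big|\sum_{\gamma\in\Gamma^j}c^{(j)}_{\gamma,s}\lambda^{(j)}_{\gamma,s}\gamma\Big|^2=\sum_{j\geq k}\sum_{s}\sum_{\gamma'\in\Gamma^{[k+1,j]}}\Big|\sum_{\gamma\in\Gamma^k}c^{(j)}_{\gamma\otimes\gamma',s}\lambda^{(j)}_{\gamma\otimes\gamma',s}\gamma(x)\Big|^2\leq\sum_{j\geq k}\sum_{s}\sum_{\gamma'}\p{\sum_{\gamma\in\Gamma^k}\lambda^{(j)}_{\gamma\otimes\gamma',s}}^2,\eeq
uniformly in $c$ and $x$, with equality at $c^{(j)}_{\gamma\otimes\gamma',s}=\overline{\gamma(x)}$ (which also recovers your lower bound on the dual side, without summability kernels). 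So either cite the duality theorem and reduce to this computation, or supply a genuine proof of the lifting estimate; the latter is a real theorem in its own right, not a routine verification.
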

\begin{proof}
We will use Proposition \ref{dualization} in conjuction with a formula for a dual norm to \eqref{eq:adapdef} (cf. \cite{weisz}). Namely, if $\varphi_k$ is a $\mathcal{F}_k$-measurable $\mathcal{H}$-valued function, then
\beq \left\|\varphi\right\|_{L^1\p{G^\N,\left[\p{\mathcal{F}_k}_{k=0}^\infty\right],\ell^2\p{\N,\mathcal{H}}}^*}\simeq \sup_k \left\|\E_k \sum_{j\geq k} \left\|\varphi_j\right\|_{\mathcal{H}}^2\right\|_{L^\infty}^\frac12 .\eeq
Thus, we calculate. 
\begin{align}
&\left\|\lambda\right\|_{L^1\p{G^\N,\left[\p{\mathcal{F}_k}_{k=0}^\infty\right],\ell^2\p{\N,\mathcal{H}}}\to\ell^1}^2\\
=& \sup_{\left|c^{(k)}_{\gamma,s}\right|=1}\left\|\sum_{k=0}^\infty\sum_{\gamma\in \Gamma^k}\sum_{s\in S}c^{(k)}_{\gamma,s}\lambda^{(k)}_{\gamma,s}\gamma\otimes e_k\otimes e_s\right\|_{L^1\p{G^\N,\left[\p{\mathcal{F}_k}_{k=0}^\infty\right],\ell^2\p{\N,\mathcal{H}}}^*}^2\\
\simeq &\sup_k \sup_{c} \sup \E_k \sum_{j\geq k}\sum_{s\in S} \left|\sum_{\gamma\in \Gamma^j} c^{(j)}_{\gamma,s}\lambda^{(j)}_{\gamma,s}\gamma\right|^2\\
=\label{eq:splitgamma} &\sup_k \sup_{c} \sup \E_k \sum_{j\geq k}\sum_{s\in S}\left|\sum_{\gamma\in \Gamma^k,\gamma'\in \Gamma^{[k+1,j]}} c^{(j)}_{\gamma\otimes\gamma',s}\lambda^{(j)}_{\gamma\otimes\gamma',s}\gamma\otimes \gamma'\right|^2\\
= \label{eq:orthon}&\sup_k \sup_{x\in G^k} \sup_{c} \sum_{j\geq k}\sum_{s\in S} \sum_{\gamma'\in \Gamma^{[k+1,j]}}\left|\sum_{\gamma\in \Gamma^k} c^{(j)}_{\gamma\otimes\gamma',s}\lambda^{(j)}_{\gamma\otimes\gamma',s}\gamma(x)\right|^2\\
= \label{eq:supswap}&\sup_k \sum_{j\geq k}\sum_{s\in S} \sum_{\gamma'\in \Gamma^{[k+1,j]}}\left(\sum_{\gamma\in \Gamma^k} \lambda^{(j)}_{\gamma\otimes\gamma',s}\right)^2.
\end{align}
Here, in \eqref{eq:splitgamma} we represented every $\gamma\in \Gamma^j$ as $\gamma\otimes \gamma'$ where $\gamma\in \Gamma^k$ and $\gamma'\in\Gamma^{[k+1,j]}$. In \eqref{eq:orthon} we used the fact that for a given $x\in G^k$, the functions $\gamma'\in \Gamma^{[k+1,j]}$ on $G^{[k+1,j]}$ are orthonormal. The equation \eqref{eq:supswap} is due to the fact that the upper bound $\left|c^{(j)}_{\gamma\otimes\gamma'}\gamma(x)\right|\leq 1$ can be attained by taking (at any given $k$, $x\in G^k$) $c^{(j)}_{\gamma\otimes\gamma',s}= \overline{\gamma(x)}$. 
\end{proof}
Because of an inequality due to Lepingle \cite{Lep}, martingale difference sequences are complemented in $L^1\p{G^\N,\left[\p{\mathcal{F}_k}_{k=0}^\infty\right],\ell^2\p{\N,\mathcal{H}}}$. Therefore, we can treat $H^1\p{G^\N,\left[\p{\mathcal{F}_k}_{k=0}^\infty\right],\mathcal{H}}$ as a complemented subspace of $L^1\p{G^\N,\left[\p{\mathcal{F}_k}_{k=0}^\infty\right],\ell^2\p{\N,\mathcal{H}}}$ by
\beq H^1\p{G^\N,\left[\p{\mathcal{F}_k}_{k=0}^\infty\right],\mathcal{H}}\ni f\mapsto \p{\Delta_k f}_{k=0}^\infty\in L^1\p{G^\N,\left[\p{\mathcal{F}_k}_{k=0}^\infty\right],\ell^2\p{\N,\mathcal{H}}}.\eeq
From this and Theorem \ref{feffadap} we immediately get
\begin{cor}
The norm of a positive sequence $\p{\lambda_{\gamma,s}}_{\gamma\in \Gamma^{\oplus\N},s\in S}$ as a Fourier multiplier from the space $H^1\p{G^\N,\left[\p{\mathcal{F}_k}_{k=0}^\infty\right],\mathcal{H}}$ to $\ell^1\p{\Gamma^{\oplus\N}\times S}$ is equivalent to 
\beq \sup_k \p{ \sum_{\gamma'\in \Gamma^{[k+1,\infty)}\setminus \{0\} }\sum_{s\in S}\left(\sum_{\gamma\in \Gamma^k} \lambda_{\gamma\otimes\gamma',s}\right)^2}^\frac12 + \sup_k \p{\sum_{s\in S} \p{\sum_{\gamma\in\Gamma^{k},\gamma_k\neq 0}\lambda_{\gamma,s}}^2}^\frac12.\eeq
\end{cor}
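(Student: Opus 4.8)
The plan is to transport the characterization of Theorem \ref{feffadap} from the full space of adapted sequences to its martingale-difference subspace, on which $H^1\p{G^\N,\left[\p{\mathcal{F}_k}_{k=0}^\infty\right],\mathcal{H}}$ sits isometrically via $f\mapsto\p{\Delta_k f}_{k=0}^\infty$. Write $\mathcal A$ for the adapted space $L^1\p{G^\N,\left[\p{\mathcal F_k}_{k=0}^\infty\right],\ell^2\p{\N,\mathcal H}}$. The decisive structural fact is that $\Delta_k f$ is $\mathcal F_k$-measurable with $\E_{k-1}\Delta_k f=0$, so its spectrum consists exactly of the characters $\gamma\in\Gamma^k$ (viewed inside $\Gamma^{\oplus\N}$ with support in $[1,k]$) whose last coordinate $\gamma_k$ is nonzero, and $\widehat{\Delta_k f}\p\gamma=\widehat f\p\gamma$ for such $\gamma$. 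Consequently the image subspace $M=\{\p{g_k}_k:\ g_k\ \mathcal F_k\text{-measurable},\ \E_{k-1}g_k=0\}$ is the range of the projection $P:\p{g_k}_k\mapsto\p{g_k-\E_{k-1}g_k}_k$, which is precisely the map annihilating, at each level $k$, the Fourier modes with $\gamma_k=0$; by Lepingle's inequality \cite{Lep} it is bounded on $\mathcal A$.

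First I would define the diagonal extension $\tilde\lambda$ of $\lambda$ to the adapted index set $\bigsqcup_k\Gamma^k$ by $\tilde\lambda^{(k)}_{\gamma,s}=\lambda_{\gamma,s}$ when $\gamma_k\neq0$ and $\tilde\lambda^{(k)}_{\gamma,s}=0$ otherwise, and prove
\beq \left\|\lambda\right\|_{H^1\to\ell^1}\simeq\left\|\tilde\lambda\right\|_{\mathcal A\to\ell^1}.\eeq
One inequality is immediate from restriction: on $M$ the action of $\tilde\lambda$ coincides termwise with that of $\lambda$ (for $g=\p{\Delta_k f}_k$ the only surviving modes are $\gamma_k\neq0$, where $\widehat{g_k}\p\gamma=\widehat f\p\gamma$), and $M$ carries the $H^1$ norm, whence $\left\|\lambda\right\|_{H^1\to\ell^1}\leq\left\|\tilde\lambda\right\|_{\mathcal A\to\ell^1}$. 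For the converse, given any adapted $g$, the multiplier $\tilde\lambda$ reads only the modes $\gamma_k\neq0$, which $P$ leaves untouched; hence the $\tilde\lambda$-action on $g$ equals the $\tilde\lambda$-action on $Pg\in M$, which is the $\lambda$-action on the corresponding $H^1$ function and is therefore at most $\left\|\lambda\right\|_{H^1\to\ell^1}\left\|Pg\right\|\leq\left\|\lambda\right\|_{H^1\to\ell^1}\left\|P\right\|\left\|g\right\|$. Lepingle's bound $\left\|P\right\|\lesssim1$ closes the loop.

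It then remains to feed $\tilde\lambda$ into \eqref{eq:oogachaka} and reorganize. Splitting the sum over $j\geq k$ according to whether $j=k$ or $j>k$: the term $j=k$ forces $\gamma'\in\Gamma^{[k+1,k]}=\{0\}$ and reproduces the second summand $\sup_k\p{\sum_s\p{\sum_{\gamma\in\Gamma^k,\gamma_k\neq0}\lambda_{\gamma,s}}^2}^\frac12$. For $j>k$, nonvanishing of $\tilde\lambda^{(j)}_{\gamma\otimes\gamma',s}$ requires $\p{\gamma\otimes\gamma'}_j=\gamma'_j\neq0$, i.e. $j$ is the last support of $\gamma'$; as $j$ ranges over $(k,\infty)$ with $\gamma'\in\Gamma^{[k+1,j]}$, $\gamma'_j\neq0$, the pairs $\p{j,\gamma'}$ are in bijection with the nonzero finitely supported elements $\gamma'\in\Gamma^{[k+1,\infty)}\setminus\{0\}$, yielding the first summand. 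Finally, the single quantity $\sup_k\p{A_k+B_k}^\frac12$ produced by \eqref{eq:oogachaka} is equivalent to $\sup_k A_k^\frac12+\sup_k B_k^\frac12$ via the elementary bounds $\p{a+b}^\frac12\leq a^\frac12+b^\frac12\leq 2^\frac12\p{a+b}^\frac12$ together with monotonicity of the suprema. (The $k=0$ level, carrying the constant character, is subsumed in the second summand and needs no separate treatment.)

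I expect the main obstacle to be the first step: verifying cleanly that the Lepingle projection is exactly the diagonal Fourier truncation, so that passing from $\lambda$ to $\tilde\lambda$ preserves the multiplier norm up to universal constants in both directions. Once this identification is secured, everything that follows is the bookkeeping reindexing of \eqref{eq:oogachaka} and the elementary supremum estimate above.
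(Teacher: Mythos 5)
Your proposal is correct and follows essentially the same route as the paper: the paper likewise defines $\lambda^{(j)}_\gamma=\lambda_\gamma$ exactly when $j=\max\{i:\gamma_i\neq 0\}$ (your $\tilde\lambda$), invokes Lepingle's inequality to view $H^1$ as a complemented subspace of the adapted space via $f\mapsto\p{\Delta_k f}_k$, and reads off the two summands from the $j>k$ and $j=k$ parts of \eqref{eq:oogachaka}. You merely spell out the transfer argument and the final bookkeeping that the paper leaves implicit.
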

\begin{proof} We apply the formula \eqref{eq:oogachaka} to $\lambda^{(j)}_\gamma=\lambda_\gamma$ for $j=\max\left\{i:\gamma_i\neq 0\right\}$ and $\lambda^{(j)}_\gamma=0$ otherwise. The first summand is produced by the $j>k$ part of the sum and the second one by $j=k$.  \end{proof}
It is worth norting that if $G$ is a finite group of bounded cardinality (equivalently, the underlying filtration is regular), the second summand can be omitted, because expressions for the dual norm with $\sum_{j\geq k}$ and $\sum_{j>k}$ are equivalent. 
\section{Hardy martingales}
We are going to consider a special subspace of $L^1\p{\T^\N}$, on which the norm happens to be equivalent to the $H^1\p{\T^\N,\left[\p{\mathcal{F}_k}_{k=0}^\infty\right]}$ norm, namely the space of Hardy martingales
\beq H^1_{\mathrm{last}}\p{\T^\N}=\overline{\mathrm{span}}\bigcup_{k=1}^\infty\left\{\e^{2\pi i \langle n,x\rangle}: n=\p{n_1,\ldots,n_k,0,\ldots}\text{ and }n_k>0\right\}\subset L^1\p{\T^\N}.\eeq
In other words, $f\in H^1_{\mathrm{last}}\p{\T^\N}$ iff $\supp\widehat{f}$ lies in the positive cone of the partial order $\geq_{\mathrm{last}}$ on $\Z^{\oplus\N}$ defined by $n>_{\mathrm{last}}0$ iff $n_j>0$ for $j=\max \supp n$. Equivalently, $f\in H^1_{\mathrm{last}}\p{\T^\N}$ iff $\Delta_k f$, which is a function of first $k$ variables, is an $H^1_0\p{\T}$ function of $x_k$. It is known that for $f\in H^1_{\mathrm{last}}\p{\T^\N}$, 
\beq \|f\|_{H^1_{\mathrm{last}}\p{\T^\N}}\simeq \left\|\p{\sum_k \left|\Delta_k f\right|^2}^\frac12\right\|_{L^1\p{\T^\N}}.\eeq
In fact, more is true. A theorem due to M\"uller \cite{mulldecomp} states in particular that the Davis--Garsia decomposition can be done within the class of Hardy martingales: 
\beq \|f\|_{H^1_{\mathrm{last}}\p{\T^\N}}\simeq \inf_{\substack{f=g+h\\g,h\in H^1_{\mathrm{last}}\p{\T^\N}}} \sum_k \E\left|\Delta_k g\right| + \E\p{\sum_k \E_{k-1}\left|\Delta_k h\right|^2}^\frac12. \eeq
In other words, by the usual identification of $f$ and $\p{\Delta_k f}_{k=1}^\infty$,
\beq \label{eq:biginterpsum} H^1_{\mathrm{last}}\p{\T^\N}\sim \p{\bigoplus_{k\geq 1} L^1\p{\T^{k-1},H^1_0\p{\T}}}_{\ell^1} + L^1\p{\T^\N, \left[\p{\mathcal{F}_{k-1}}_{k=1}^\infty\right],\ell^2\p{\N,H^2_0\p{\T}}},\eeq
where in the second summand, at each $k\in \N$, the last $\T$ corresponds to $x_k$. This allows us to prove
\begin{thm}The norm of a positive sequence $\p{\lambda_n}_{n>_{\mathrm{last}}0}$ as an $H^1_{\mathrm{last}}\p{\T^\N}\to\ell^1$ multiplier is equivalent to 
\beq \sup_k \left\| \p{\sum_{n_{<k}}\lambda_{n_{<k},n_k}}_{n_k\in \Z_+}\right\|_{F} + \sup_k \p{\sum_{n_{>k}\in \Z^{[k+1,\infty)}\setminus \{0\}} \p{\sum_{n_{\leq k}\in \Z^k}\lambda_{n_{\leq k},n_{>k}}}^2}^\frac12. \eeq
\end{thm}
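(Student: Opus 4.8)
The plan is to exploit M\"uller's decomposition \eqref{eq:biginterpsum}, which realizes $\honelast\p{\T^\N}$, up to equivalence of norms, as a sum $A+B$ of the Davis summand $A=\p{\bigoplus_{k\geq 1}L^1\p{\T^{k-1},H^1_0\p{\T}}}_{\ell^1}$ and the Garsia summand $B=L^1\p{\T^\N,\left[\p{\mathcal F_{k-1}}_{k=1}^\infty\right],\ell^2\p{\N,H^2_0\p{\T}}}$, both being identified, via $f\mapsto\p{\Delta_k f}_k$, with adapted sequences inside the common space $\prod_k L^1\p{\T^{k-1},H^1_0\p{\T}}$, so that $\p{A,B}$ is a compatible couple. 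For such a sum one has $\p{A+B}^*=A^*\cap B^*$ with $\n{\psi}_{\p{A+B}^*}=\max\p{\n{\psi}_{A^*},\n{\psi}_{B^*}}$; hence Proposition \ref{dualization}, together with the elementary estimate $\max\p{\sup_c u,\sup_c v}\leq\sup_c\max\p{u,v}\leq\sup_c u+\sup_c v$, reduces the $\honelast\p{\T^\N}\to\ell^1$ multiplier norm of $\lambda$ to the sum of its $A\to\ell^1$ and $B\to\ell^1$ multiplier norms. I would then match these with the first and the second summand, respectively.

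For the Garsia summand $B$ I would apply Theorem \ref{feffadap} directly, with $G=\T$, $\Gamma=\Z$, and the Hilbert space $\mathcal H=H^2_0\p{\T}$, whose orthonormal basis $\p{\e^{2\pi\i s t}}_{s\geq 1}$ lets me take $S=\Z_{>0}$. Passing to $\tilde f_k=\Delta_{k+1}f$ replaces the shifted filtration $\mathcal F_{k-1}$ by the standard one, so that a frequency $n>_{\mathrm{last}}0$ with $\max\supp n=k+1$ corresponds to the base character $\gamma=n_{\leq k}\in\Z^k$ and the basis index $s=n_{k+1}>0$, with $\gamma\otimes e_s=\e^{2\pi\i\ilsk{n,\cdot}}$. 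Feeding the multiplier $\lambda^{(j)}_{\gamma,s}=\lambda_{\p{\gamma,s}}$ (for $\gamma\in\Z^{j}$, $s\in\Z_{>0}$) into \eqref{eq:oogachaka} and amalgamating the sums over $j\geq k$, over $s=n_{j+1}>0$, and over $\gamma'=n_{[k+1,j]}$ into a single sum over $n_{>k}\in\Z^{[k+1,\infty)}\setminus\{0\}$ — the positivity of the last coordinate being automatic, since $\lambda$ is only indexed by $n>_{\mathrm{last}}0$ — reproduces exactly the second summand.

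For the Davis summand the $\ell^1$-sum structure gives $A^*=\p{\bigoplus_k A_k^*}_{\ell^\infty}$ with $A_k=L^1\p{\T^{k-1},H^1_0\p{\T}}$, and because the unimodular coefficients attached to distinct levels $k=\max\supp n$ are independent, Proposition \ref{dualization} yields that the $A\to\ell^1$ multiplier norm is the supremum over $k$ of the $A_k\to\ell^1$ multiplier norms. It then remains to show that the $L^1\p{\T^{k-1},H^1_0\p{\T}}\to\ell^1$ multiplier norm of $\p{\lambda_{\p{n_{<k},n_k}}}$ equals the Fefferman norm $\n{\Lambda^{(k)}}_F$ of the collapsed sequence $\Lambda^{(k)}_{n_k}=\sum_{n_{<k}\in\Z^{k-1}}\lambda_{\p{n_{<k},n_k}}$. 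Using $A_k^*=L^\infty\p{\T^{k-1},\mathrm{BMOA}}$, the dual element evaluated at $x_{<k}$ is the Hardy function with coefficients $d_{n_k}=\sum_{n_{<k}}c_{\p{n_{<k},n_k}}\lambda_{\p{n_{<k},n_k}}\e^{2\pi\i\ilsk{n_{<k},x_{<k}}}$. Choosing, at a fixed $x_{<k}$ and for each $n_k$ separately, $c_{\p{n_{<k},n_k}}=\overline{\e^{2\pi\i\ilsk{n_{<k},x_{<k}}}}\,\omega_{n_k}$ realizes $d_{n_k}=\omega_{n_k}\Lambda^{(k)}_{n_k}$ with $\omega_{n_k}$ an arbitrary unimodular constant; Proposition \ref{dualization} applied to $H^1_0\p{\T}$, i.e.\ Fefferman's theorem, then identifies the resulting supremum with $\n{\Lambda^{(k)}}_F$, which is the lower bound.

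The reverse inequality is where the real work lies, and it is the step I expect to be the main obstacle. For any admissible $c$ and any $x_{<k}$ one has $|d_{n_k}|\leq\Lambda^{(k)}_{n_k}$, so writing $d_{n_k}=\rho_{n_k}\Lambda^{(k)}_{n_k}$ with $|\rho_{n_k}|\leq 1$ I would represent each $\rho_{n_k}$ as the mean of an independent circle-valued random variable $\omega_{n_k}$; the Bochner integral $\E\p{\omega_{n_k}\Lambda^{(k)}_{n_k}}_{n_k}$ then equals $\p{d_{n_k}}_{n_k}$ in $\mathrm{BMOA}$, the Fourier coefficients being continuous functionals. Convexity of the norm (Jensen) gives $\n{\p{d_{n_k}}}_{\mathrm{BMOA}}\leq\sup_{|\omega|=1}\n{\p{\omega_{n_k}\Lambda^{(k)}_{n_k}}}_{\mathrm{BMOA}}=\n{\Lambda^{(k)}}_F$, the case $\n{\Lambda^{(k)}}_F=\infty$ being vacuous. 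The delicate points are precisely this reduction of the full product of disks to its extreme circle-product and the rigorous use of the vector-valued duality $L^1\p{\T^{k-1},H^1_0\p{\T}}^*=L^\infty\p{\T^{k-1},\mathrm{BMOA}}$, with its attendant measurability and Radon--Nikodym technicalities. Assembling the Davis and Garsia contributions then gives the two asserted summands.
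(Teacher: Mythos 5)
Your overall architecture coincides with the paper's: Müller's decomposition \eqref{eq:biginterpsum}, boundedness on the sum reduced to boundedness on each summand, Theorem \ref{feffadap} applied verbatim to the Garsia summand with $\mathcal H=H^2_0\p{\T}$, and a fiberwise Fefferman condition for the Davis summand. The one place you genuinely diverge is the upper bound for the Davis part, which you single out as the main obstacle and attack on the dual side: you pass to $L^\infty\p{\T^{k-1},\mathrm{BMOA}}$, observe that the fiber coefficients satisfy $\left|d_{n_k}\right|\leq\Lambda^{(k)}_{n_k}$, and control the BMOA norm over the full polydisk of coefficients by its circle-product of extreme points via a barycenter/Jensen argument. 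This works (at least after reducing to finitely supported $\lambda$ to make the Bochner integral and the $w^*$-duality $L^1\p{\mu,X}^*=L^\infty_{w^*}\p{\mu,X^*}$ unproblematic), but the paper does this step on the primal side in three lines: bound $\left|\widehat{f}\p{n_{<k},n_k}\right|\leq\int_{\T^{k-1}}\left|\widehat{f\p{x,\cdot}}\p{n_k}\right|\d x$, swap sum and integral, and apply Fefferman's theorem to each fiber $f\p{x,\cdot}\in H^1_0\p{\T}$, yielding directly $\sum_n\lambda_n\left|\widehat f\p{n}\right|\leq\n{\Lambda^{(k)}}_F\|f\|_{L^1\p{\T^{k-1},H^1_0\p{\T}}}$. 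That primal estimate buys you exactly what you were worried about losing: it needs no identification of the dual space, no measurable selection, and no extreme-point reduction, since the triangle inequality collapses the $n_{<k}$ sum before Fefferman's theorem is ever invoked. Your lower bound (testing at a fixed $x_{<k}$ with $c_{n_{<k},n_k}=\overline{\e^{2\pi\i\ilsk{n_{<k},x_{<k}}}}\omega_{n_k}$) is the dual formulation of the paper's test functions $\varphi\otimes\psi$ with $\widehat\varphi\to1$, and your treatment of the $\ell^1$-direct sum and of the Garsia summand matches the paper's. In short: correct, same skeleton, with a more laborious but workable substitute for the paper's elementary Fubini step.
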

\begin{proof}
In order for the multiplier operator to be bounded on the interpolation sum, it has to be bounded on each of its summands. In order for $\p{\lambda_{n_{<k},n_k}}_{n_{<k}\in \Z^{k-1},n_k\in \Z_+}$ to act on a single $L^1\p{\T^{k-1},H^1_0\p{\T}}$, the inequality
\beq \sum_{n_{<k},n_k}\lambda_{n_{<k},n_k}\left|\widehat{f}\p{n_{<k},n_k}\right|\lesssim \|f\|_{L^1\p{\T^{k-1},H^1_0\p{\T}}}\eeq
has to be satisfied. By testing on the functions of the form $\varphi\otimes \psi$, where $\psi\in H^1_0\p{\T}$ and $\widehat{\varphi}\to 1$, we see that the condition
\beq \left\| \p{\sum_{n_{<k}}\lambda_{n_{<k},n_k}}_{n_k\in \Z_+}\right\|_{F}\lesssim 1\eeq
has to be satisfied. On the other hand, 
\begin{align} \sum_{n_{<k},n_k}\lambda_{n_{<k},n_k}\left|\widehat{f}\p{n_{<k},n_k}\right| \leq & \sum_{n_{<k},n_k}\lambda_{n_{<k},n_k}\int_{\T^{k-1}}\d x \left|\widehat{f\p{x,\cdot}}\p{n_k}\right|\\
\leq & \int_{\T^{k-1}}\d x \left\| \p{\sum_{n_{<k}}\lambda_{n_{<k},n_k}}_{n_k\in \Z_+}\right\|_{F} \left\|f\p{x,\cdot}\right\|_{H^1_0\p{\T}}\\
= & \left\| \p{\sum_{n_{<k}}\lambda_{n_{<k},n_k}}_{n_k\in \Z_+}\right\|_{F} \|f\|_{L^1\p{\T^{k-1},H^1_0\p{\T}}}.\end{align}
Therefore, the condition for $\lambda$ to act boundedly on the first summand of \eqref{eq:biginterpsum} is
\beq \sup_k \left\| \p{\sum_{n_{<k}}\lambda_{n_{<k},n_k}}_{n_k\in \Z_+}\right\|_{F}\lesssim 1.\eeq
For the second summand, we apply Theorem \ref{feffadap} directly to get the necessary and sufficient condition
\begin{align} 1\gtrsim &\sup_k \sum_{j\geq k} \sum_{n_{j+1}\in \Z_+}\sum_{n_{[k+1,j]}\in \Z^{[k+1,j]}}\p{\sum_{n_{[1,k]}\in \Z^{[1,k]}} \lambda_{n_{[1,k]},n_{[k+1,j]},n_{j+1}}}^2\\
=& \sup_k \sum_{n_{>k}\in \Z^{[k+1,\infty)}\setminus \{0\}} \p{\sum_{n_{\leq k}\in \Z^k}\lambda_{n_{\leq k},n_{>k}}}^2.\end{align}
\end{proof}

\end{document}